\newtheorem{theorem}{Theorem}[section]
\newtheorem{theorem-section}{Theorem}[section]
\newtheorem{proposition}[theorem]{Proposition}
\newtheorem{proposition-section}[theorem-section]{Proposition}
\newtheorem{corollary}[theorem]{Corollary}
\newtheorem{corollary-section}[theorem-section]{Corollary}
\newtheorem{question}[theorem]{Question}
\theoremstyle{definition}
\newtheorem{definition}[theorem]{Definition}
\newtheorem{remark}[theorem]{Remark}
\newtheorem{remark-section}[theorem-section]{Remark}
\newtheorem{example}[theorem]{Example}
\newcommand{\op}[1]{\operatorname{#1}}
\newcommand{\widebar}[1]{\overline{#1}}
\def\NN{\op{\mathbb{N}}}
\def\RR{\op{\mathbb{R}}}
\def\FF{\op{\mathbb{F}}}
\def\AA{\op{\mathbb{A}}}
\def\PP{\op{\mathbb{P}}}
\def\P{\op{P}}
\def\NP{\op{NP}}
\def\Paff{\op{P}_{\op{aff}}}
\def\NPaff{\op{NP}_{\op{aff}}}
\def\Psch{\op{P}_{\op{sch}}}
\def\NPsch{\op{NP}_{\op{sch}}}
\def\Pproj{\op{P}_{\op{proj}}}
\def\NPproj{\op{NP}_{\op{proj}}}
\def\VNP{\op{VNP}}
\def\VP{\op{VP}}
\def\bigO{\op{O}}
\def\UCR{\op{R}}
\def\Res{\op{Res}}
\def\loccit{\emph{loc. cit. }}
\def\Res{\op{Res}}
\def\nn{\underline{{\mathbf{n}}}}
\def\mm{\underline{{\mathbf{m}}}}
\def\on{\op{n}}
\def\om{\op{m}}
\def\AA{\op{\mathbb{A}}}
\def\cC{\mathcal{C}}
\newcommand{\defemph}[1]{\emph{#1}}
\title{Complexity Classes and Completeness in Algebraic Geometry}
\author{M. Umut Isik}
\address{
  \begin{tabular}{l}
   Mehmet Umut Isik  \\ 
   \hspace{.1in} Department of Mathematics, University of California, Irvine \\
   \hspace{.1in} Email: { isik@math.uci.edu } \\
  \end{tabular}
}
\numberwithin{equation}{section}
\begin{document}
\begin{abstract} We study the computational complexity of sequences of projective varieties. We define analogues of the complexity classes P and NP for these and prove the NP-completeness of a sequence called the universal circuit resultant. This is the first family of compact spaces shown to be NP-complete in a geometric setting.  
\end{abstract}

\maketitle

Valiant's theory of algebraic/arithmetic complexity classes is an algebraic analogue of Boolean complexity theory, where Boolean functions are replaced by polynomials over any ring, and Boolean circuits are replaced by arithmetic circuits that use the $\times$, $+$ operations of the ring instead of the Boolean operations. An interesting point about Valiant's theory is how sequences in $\VNP$ are made. Keeping with the Boolean procedure of taking an efficiently computable `verifier' $g_n(x_1,\dots,x_n)$ and searching through all `proofs' $e$ it might accept by taking 
    $$ f_n(\cdot) = \bigvee_{e\in \{0,1\}^{m(n)}} g_{n+m(n)}(\cdot,e), $$
    Valiant's theory defines the sequences in $\VNP$ as those of the form 
$$ f_n(\cdot) = \sum_{e\in \{0,1\}^{m(n)}} g_{n+m(n)}(\cdot,e), $$  
where $g_n(x_1,\dots,x_n) \in \FF\left[ x_1,\dots,x_n \right]$ form an efficiently computable sequence. In this sense, Valiant's theory, while a source of interesting geometry problems, is a purely algebraic analogue of Boolean complexity.  

What would a geometric analogue look like? If we wanted to make a naive, geometric version of Boolean complexity, we would do the following. Take sequences of $X_n \subset \FF_{2}^{n}$, and say they are in $\P$ if they are the zero-set of polynomials $\FF_{2}[x_1,\dots,x_n]$ which can be computed by arithmetic circuits of polynomially bounded size. Then, define $\NP$ by taking sequences $(Y_n \subset \FF_2^{n})$ in $\P$ and projecting them down to the first $n$ coordinates: $X_n = \pi(Y_{n+m(n)})$. It is easy to see that we can make arithmetic circuits over $\FF_2$ efficiently simulate Boolean circuits and vice-versa; so the these two theories are the same. 

The issue arises when we generalize from a finite field to an arbitrary field $\FF$. For general fields, the projections of affine varieties are not necessarily closed subsets. One solution is to generalize from algebraic subsets to semi-algebraic subsets, which would give a non-uniform version of the Blum-Shub-Smale (BSS) theory, which is a geometric analogue of the Turing theory. Such an approach would be a more basic version of the complexity of constructible functions and sheaves studied in \cite{basuconstr}.  

The alternative, if one wants to stay within algebraic geometry, is to work with projective varieties. This is the approach we take in this paper. The basic objects we consider are sequences $(X_n)_{n=1}^{\infty}$ of projective (and bi-projective) varieties. A sequence $(X_n)$ is in $\Pproj$ if, for each $n$, there are homogeneous polynomials whose zero-set is $X_n$, which are the outputs of a homogeneous arithmetic circuit of polynomially bounded size, and which have polynomially bounded degree. Sequences in $\NPproj$ are the projections of sequences of $X_n \subset \PP^{\on_1(n)}\times \PP^{\on_2(n)}$ in $\Pproj$, onto the first component $\PP^{\on_{1}(n)}$. This is similar to the compact projective complexity classes studied in the BSS setting in \cite{basutoda}; the main difference is that we are in the arithmetic circuit model of computation.  

Key concepts in computational complexity are those of reduction and completeness. A sequence $(X_n \subset \PP^{\on(n)})$ reduces to $(Y_n\subset \PP^{\om(n)})$ if there are linear maps $\rho_{n}: \PP^{\on(n)} \to \PP^{\om(p(n))}$, where $p(n)$ is a polynomially bounded function, such that $\rho^{-1}(Y_n) = X_n$. Our main result is that there is a sequence, which we call the \emph{universal circuit resultant} which is $\NPproj$-complete (or, more precisely, the Segre embedding of the universal circuit resultant, c.f. Theorems~\ref{thmucrishard} and~\ref{thmsegreucrcomplete}). A key tool in the construction of the universal circuit resultant is a modified version of the universal circuits of \cite{raz}. 

The universal circuit resultant is the first example of a compact family proven to be $\NP$-complete in a geometric complexity setting. The $\NP$-completeness of the usual resultant (the homogeneous Nullstellensatz problem) is a long-standing open problem in BSS theory \cite{shubproblems}.  

We also discuss similar definitions for affine varieties and projective schemes. In the affine case, we define $\NPaff$ by taking closures of projections of families in $\Paff$; this way, the image under a projection stays affine. However, it is not known whether $\NPaff$ is closed under reductions. In the scheme case, the separation of $\Psch$ and $\NPsch$ should be an easier problem than in the projective variety case.

Why study a complexity theory of algebraic varieties? One motivation is to make an analogue of the $\P$ vs $\NP$ problem, in algebraic geometry, that might possibly be easier to make progress on. The second aim is to lay the ground-work for notions and questions of computational complexity theory to be used as tools in algebraic geometry. Many complexity-related results on geometry questions involve starting with finite field, or rational coefficients so that complexity can be discussed with the Turing machine or Boolean models. While this makes perfect sense for finite field questions and for concrete applications; from an algebraic geometry point of view, it adds the additional difficulty of arithmetic geometry on top of the difficulty of algebraic geometry. 
The language of this paper would be useful for expressing
results on the complexity of de-singularization, 
minimal-model program, projective duality and other interesting geometric operations without assuming rational coefficients or any bit reduction.

While we discuss a geometric theory of complexity, there is no direct relation between this work and the Geometric Complexity Theory program of Mulmuley and Sohoni (\cite{gct1}, \cite{gctsurvey} and references therein). GCT is an approach to proving lower bounds in algebraic complexity using deep statements in representation theory. As such, it can also prove lower bounds on the complexity of hypersurfaces, but the theory discussed here is not advanced enough to ask such questions at this point.    

\subsection*{Acknowledgments} I would like to thank Vladimir Baranovsky and Saugata Basu for useful discussions on the subject of this paper. 

\vspace{0.1in}

\section{Complexity of Projective Varieties}

We refer to the books \cite{Buergetal, BuergisserCompleteness} for background on algebraic/arithmetic complexity. 

\subsection{Arithmetic circuits}
We use arithmetic circuits as our model of computation. Let $\FF$ be any field.
An arithmetic circuit over $\FF$ with input variables $x_1,\dots,x_n$ is a labeled directed acyclic multigraph (two or more distinct edges from the same source to the same target are allowed). Every node with in-degree $0$ is called an input-gate and is labeled by an input variable or a constant in $\FF$. Every other node is labeled by $\times$ or $+$, making it a product-gate or sum gate. Every edge is labeled by an element of $\FF$ called the weight of the edge. Nodes with no outgoing edges are called output gates. 

Each gate computes a polynomial in $\FF\left[ x_1,\dots,x_n \right]$ as follows. Input gates calculate the variable or constant they are labeled with. If $v$ is a sum-gate, and its incoming edges are with weights $\lambda_1,\dots,\lambda_d$ from gates computing $f_1,\dots,f_d$, then $v$ computes the polynomial $\sum_{i=1}^{d}\lambda_i f_i$. If $v$ is a product gate, it computes $\prod_{i=1}^{d}\lambda_i f_i$. The size of a circuit is the number of edges in the circuit. 

An equivalent model is that of a straight-line program. These are sequences of instructions that start with a list of the input variables, and each subsequent instruction is weighted sum or product of earlier instructions.

\subsection{Basic definitions}
The basic objects we want to consider are sequences of projective varieties $(X_n) = (X_n)_{n=1}^{\infty}$. But, since we will be taking projections, we want to also consider sequences of bi-projective varieties, i.e.\ sequences $X_n \subset \PP^{\on_1(n)}\times \PP^{\on_2(n)}$ of varieties, where $\on_i: \NN \to \NN$ are any polynomially bounded functions. By abuse of notation, we will write $\nn(n)= (\on_1(n),\on_2(n))$ and 
$$X_n \subset\PP^{\nn(n)} = \PP^{\on_1(n)}\times\PP^{\on_2(n)}.$$
Sequences of projective varieties are the special case where $\on_2(n) = 0$ identically. 

In order to prevent our language from becoming too cumbersome, we may drop the `bi' in bi-projective varieties and bi-homogeneous polynomials; referring to them as projective and homogeneous when the meaning is clear from the context. 

\begin{definition}
  A homogeneous algebraic circuit (or straight-line program) $C$  \defemph{computes} a projective variety $X\subset \PP^{n_1}\times \PP^{n_2}$ if its output polynomials $f_1,\dots,f_m$ are homogeneous polynomials, and  $X$ is the zero-set of $f_1,\dots,f_m$ in $\PP^{n_1}\times \PP^{n_2}$. 
\end{definition}

In other words, $C$ computes $X$ if it can solve the problem of determining whether a point in $\PP^{n_1}\times \PP^{n_2}$ is in $X$. The cost of the computation is the size of the algebraic circuit.

\begin{definition}
  The \emph{complexity} $\op{C}(X)$ of $X \subset \PP^{n_1}\times \PP^{n_2}$ is the size of the smallest homogeneous algebraic circuit that computes $X$. For a sequence $(X_n)$, the complexity is a function $\NN \to \NN$ that takes each $X_n$ to its complexity. This function is also denoted by $\op{C}(X_n)$ by abuse of notation. 
\end{definition}

\begin{example}
  As a basic example, let $X_n = \left\{ \left[ 1:0:\dots:0 \right] \right\} \subset \PP^{n}$. Let $x_0,\dots,x_n$ be the homogeneous variables for $\PP^{n}$. Then, a circuit of size $n$, which directly outputs $x_1,\dots,x_n$ computes $X_n$ so $\op{C}(X_n)\leq n$. Each of the variables $x_1,\dots,x_n$ must be involved in any computation of $\left\{ [1:0:\dots:0] \right\}$ because otherwise the zero-set would be a cone. So the above is the most efficient computation of $X_n$ and $\op{C}(X_n) = n$.
\end{example}

\begin{example}
  Consider the zero-set $X\subset \PP^{n}$ of a polynomial $f$ in $\FF[x_0,\dots,x_n]$. Then the complexity of $X$ is bounded above by the algebraic complexity of $f$; but the two may differ for two reasons. First, since we are looking at the complexity of the zero-set rather than scheme-theoretic complexity, we could be looking at a non-reduced polynomial, $f = g^r$. Second, if our field is not algebraically closed, we wouldn't have Nullstellensatz, so we could have, for example, $\FF=\RR$ and $f = x_0^2 + \dots + x_n^2$, and the zero-set would be empty, making the arithmetic complexity of $f$ different from the complexity of its zero-set.  
\end{example}

\begin{example}\label{exuniversalquadric}
  [Universal Quadric] Consider the universal quadratic equation $$q_n(x_0,\dots,x_n,a_1,\dots,a_N) = a_0 x_0^2 + a_1 x_0 x_1 + \dots + a_N x_n^2$$
  in variables $x_0,\dots,x_n$. Here, $q_n$ has a term for each degree 2 monomial, so $N = {n+2 \choose 2}-1$. The zero-set $Q_n \subset \PP^{n}\times \PP^{N}$ is called the \emph{universal quadric}. By making a circuit that computes each term and adds the result, we see that the universal quadric has complexity in $\bigO(n^2)$. Similarly, the solution to the universal degree $d$ polynomial has complexity in $\bigO(n^{d})$.   
\end{example}

\subsection{Complexity classes, reduction and completeness}

\begin{definition}
  A sequence $(X_n\subset \PP^{\nn})$ is in $\Pproj$ if there is a sequence of homogeneous circuits, of size polynomially bounded in $n$, that compute polynomials $f_1,\dots,f_{m(n)}$ of degree and number polynomially bounded in $n$, whose zero-set is $X_n$.  
\end{definition}

To define $\NPproj$, we consider the sequences of projections  
$$\pi_{n}^{\on_1,\on_2} : \PP^{\on_1}\times \PP^{\on_2} \to \PP^{\on_1},$$
where $\on_1$ and $\on_2$ are functions polynomially bounded in $n$.

\begin{definition}
  A sequence $(X_n \subset \PP^{\on_1})$ is in $\NPproj$ if there is a sequence $(Y_n \subset \PP^{\on_1} \times \PP^{\on_2})$ such that $(Y_n)$ is in $\Pproj$ and $X_n = \pi_{n}^{\on_1,\on_2}(Y_n)$. 
\end{definition}

We now discuss a sequence which is in $\NPproj$. 

\begin{example}[Resultant]\label{exresultant}  
Let $V_n = \FF^{n+1}$ be a vector space of dimension $n+1$. Fix a degree $d$ and consider 
$$\PP^{\on(n)} = \PP(S^dV_n\times \dots \times S^d V_n) $$
where there are $n+1$ terms in the product, making $\on(n)={n+d \choose d}(n+1)-1$.
Every point in $\PP^{\on(n)}$ corresponds to a tuple of $n+1$ equations of degree $d$ in $n+1$ variables. 

Define $\Res_{d,n}\subset \PP^{\on (n)}$ to be the subvariety consisting of the points that correspond to tuples of equations, homogeneous of degree $d$, that have common non-zero solutions; i.e., $\Res_{d,n}$ is the resultant. In other words, letting $Z_{d,n}$ be the projectivization of $\op{ev}^{-1}(0)$, where $\op{ev}:\AA^{\on(n)+1} \times \AA^{n+1} \to \AA^{n+1}$ is the evaluation map; $\Res_{d,n}$ is the image $\Res_{d,n} = \pi_1 (Z_{d,n})$.  
Since $d$ is fixed, $Z_{d,n}$ is in $\Pproj$, and therefore we have that $\Res_n$ is in $\NPproj$. There is also the related example of the discriminant variety which we do not discuss here.
\end{example}

We can now discuss reductions and completeness. 
In Boolean complexity and in Valiant complexity \cite{ValiantClasses}, reductions are usually taken to be substitutions of variables and constants. Asymptotically, there is no significant difference between considering substitution reductions and affine-linear reductions, c.f. \cite{kayalaffine}. 

For families of projective varieties, we take reductions to be linear maps; and this is all we need for Theorem~\ref{thmsegreucrcomplete}. But when we consider products of projective spaces, we must be more careful. To this end, let us define a \emph{projective-linear map} $\PP^{n_1}\times \dots \times \PP^{n_s} \to \PP^{m_1}\times \dots \times \PP^{m_r}$ to be a map that is mapping, to each component, as either a linear map depending on only one of the $\PP^{n_i}$ or a constant map. An example of a projective-linear map $\PP^{1}\times \PP^{1} \to \PP^{1} \times \PP^{1} \times \PP^{1}$ is the map $([x_0:x_1],[y_0:y_1])\to ([y_0+2 y_1:y_1],[3 : 5] ,[ 4 x_0:x_0+x_1])$. Note that the target spaces we will consider will only have two components. 
Of course, other reductions, such as reductions linear in each set of variables but higher in total degree could also be considered, c.f. Remark~\ref{remmultiprojectivesegre}. 

\begin{definition}
  We say that a sequence $(X_n \subset \PP^{\nn})$ reduces to a sequence $(Y_n \subset \PP^{\mm})$ if there is a polynomially bounded function $p:\NN \to \NN$ and a sequence of projective-linear maps $\rho_{n} : \PP^{\nn(n)} \to \PP^{\mm (p(n))}$ such that, for each $n$, $X_n$ is the pullback of $Y_{\mm(p(n))}$ via $\rho_{n}$, i.e.\ $\rho_{n}^{-1}(Y_{p(n)}) = X_n$.
\end{definition}

Reductions are sometimes called $p$-projections in the literature. We reserve the word projection for taking the image under geometric projections $\pi: A \times B \to A$.  

\begin{remark}
  Unlike the affine case (c.f. section \ref{affinesection}), $\Pproj$ and $\NPproj$ are automatically closed under polynomial reductions as the image of a projective variety is always closed. 
\end{remark}

\definition A sequence $(X_n)$ is said to be $\cC$-hard for a complexity class $\cC$ if every sequence in $\cC$ can be reduced to $(X_n)$. If $(X_n)$ is $\cC$-hard and is in $\cC$, then $(X_n)$ is said to be $\cC$-complete.

\subsection{Universal circuits} 
We now describe a modified version of the construction, in \cite{raz}, of a $\VP$-complete sequence constructed via \emph{universal circuits} (see also \cite{durandetal, mahajansaurabh} for other $\VP$-complete sequences). The difference is that we want to handle the bi-homogeneous case. We will later use this sequence of circuits to make $\NPproj$-complete and $\Pproj$-complete sequences. 
It should be noted that this section could have been based also on the `genetic computations' studied in \cite{BuergisserCompleteness} Chapter 5.

The universal circuits of \cite{raz} are based on the ability to put any circuit into normal homogeneous form.  

\begin{definition}[Normal bi-homogeneous form]
  A bi-homogeneous arithmetic circuit is in normal bi-homogeneous form if it satisfies the following:
  \vspace{-0.1in}
  \begin{enumerate}[(i)]
    \item All leaves are labeled by input variables.
    \item All edges from the leaves go to sum gates.
    \item All output gates are sum gates.
    \item Gates are alternating in the sense that if $(u,v)$ is an edge and if $v$ is a sum gate, then $u$ is a product gate and vice versa.
    \item The fan-in of every product gate is 2.
    \item The fan-out of every sum gate is 1. 
  \end{enumerate}
  \vspace{0.1in}
\end{definition}

\begin{proposition}\label{propnormalform}
For every bi-homogeneous circuit $C$ of size $s$, there is a circuit $C'$ in normal bi-homogeneous form, of size $\bigO(s)$, that has the same output.  
\end{proposition}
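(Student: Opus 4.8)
The plan is to carry out the bi-homogeneous analogue of the classical procedure that puts an arithmetic circuit into homogeneous normal form (the construction underlying \cite{raz}), taking advantage of the fact that $C$ is \emph{already} bi-homogeneous: every gate of $C$ computes a bi-homogeneous polynomial and so carries a well-defined bidegree $(i,j)$, with sum gates preserving bidegree and product gates adding it. Because of this, the usual ``homogenization'' step — splitting every gate into its bidegree components, which in general costs a factor polynomial in the degree — is unnecessary, and only structural surgery remains. Every surgery below visibly preserves the computed polynomials and keeps the circuit bi-homogeneous; the content is to arrange them so that the size grows only by a constant factor.

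Concretely I would proceed as follows. \textbf{(1) Eliminate constant leaves} (condition (i)): by bi-homogeneity a maximal sub-circuit all of whose gates have bidegree $(0,0)$ computes a scalar and externally can feed only into product gates (it cannot feed a sum gate of positive bidegree), so collapse each such sub-circuit to a single element of $\FF$ and absorb it into the weights on the outgoing edges of the product gate it feeds, splicing out any product gate thereby left with one non-constant input; if $C$ computes only constants the statement is vacuous. \textbf{(2) Make product gates binary} (condition (v)): replace a product gate of fan-in $d$ by a balanced binary tree of $d-1$ binary product gates, redistributing the edge weights. \textbf{(3) Impose alternation} (condition (iv)): the product-to-product edges are exactly those created in step (2), and each is broken by inserting a fan-in-one, fan-out-one identity sum gate on it; sum-to-sum edges cannot be broken this way without reintroducing a forbidden fan-in, so they are removed by contracting the upper sum gate into its sum-successor and redistributing weights, done in topological order from the leaves so that no new sum-to-sum edge appears. \textbf{(4) Enforce (ii), (iii), (vi)}: put an identity sum gate on every leaf-to-product edge and on top of every product output gate, and, whenever a sum gate still has fan-out $k>1$, replace it by $k$ copies sharing the same input sub-circuit.

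The real difficulty lies entirely in the size accounting needed to get $\bigO(s)$ rather than merely a polynomial bound in $s$: the contractions in step (3) and the duplications in step (4) are the dangerous operations, because a single sum gate of large fan-in \emph{and} large fan-out, handled naively, already costs quadratically. The remedy, following \cite{raz}, is to interleave the steps and to keep the out-degree of every gate bounded by a constant throughout — which can be arranged at linear cost, since rerouting a gate's output through a balanced binary tree of relay gates costs the sum of the out-degrees of $C$, namely $\bigO(s)$ — so that each contraction or duplication only multiplies the number of edges incident to the gate involved by a bounded factor. Choosing the order of the surgeries so that this bounded-out-degree invariant is re-established after each one, and carrying out the attendant counting, is the main obstacle; the rest is routine.
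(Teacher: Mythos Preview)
Your proposal is correct and follows essentially the same approach as the paper: both invoke the normalization construction of \cite{raz}, observe that every step preserves bi-homogeneity, and note that the usual $r^2$ blow-up disappears because the homogenization step is unnecessary when the input circuit is already bi-homogeneous. You spell out the individual surgeries and the size accounting in more detail than the paper, which simply cites \cite[Proposition~2.3]{raz} and records those two observations, but the argument is the same.
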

\begin{proof}
  We refer to \cite[Proposition 2.3]{raz} for the proof of the fact that every circuit of size $s$ can be turned into circuit in normal homogeneous form. We simply observe that all the steps in the proof in \loccit preserve the bi-homogeneousness of the circuit. One difference between the statement in \loccit and this one is that size $\bigO(s)$ is enough for us; whereas, \loccit has the size as $\bigO(sr^2)$, where $r$ is the maximum degree of the output polynomials. The difference is that \loccit starts with non-homogeneous circuits, and the $r^2$ factor is the cost of homogenizing the circuit. So, as we already start with bi-homogeneous circuits, the normal form circuit has size $\bigO(s)$. 
\end{proof}

We say that a circuit $\Phi$ is \emph{universal} for a set $S$ of circuits if, for every $C\in S$, there is an assignment of multipliers for the edges of $\Phi$ that makes $\Phi$ compute the same polynomials as $C$. 

The proposition \cite[Proposition 2.8]{raz} proves the existence of universal circuits; we adapted it to the bi-homogeneous case we are considering as follows. 

\begin{theorem}
  \label{thmuniversalcircuit}
For every $r_1$, $r_2$, $n$, $m$ and $s\in \NN$, with $s\geq n+m$, there exists a circuit $\Phi$ that is universal for the set of circuits of size $s$ that are bi-homogeneous in $n$ and $m$ inputs, and that output $n$ polynomials of bi-degree $(r_1,r_2)$. Moreover, $\Phi$ is in normal bi-homogeneous form and has $\bigO(r_1^2r_2^2s)$ gates.   
\end{theorem}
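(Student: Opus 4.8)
The plan is to follow the universal-circuit construction of \cite[Proposition~2.8]{raz}, carrying a pair of degrees along instead of a single one. By Proposition~\ref{propnormalform} it is enough to build one circuit $\Phi$, in normal bi-homogeneous form, that is universal for all circuits in normal bi-homogeneous form of size at most $cs$ with $n+m$ inputs and $n$ outputs of bi-degree $(r_1,r_2)$, where $c$ is the absolute constant implicit in Proposition~\ref{propnormalform}: any bi-homogeneous circuit of size $s$ normalizes to such a circuit with the same inputs and the same outputs, so a $\Phi$ universal for the normalized circuits is universal for all of them. The structural feature we exploit is that in a normal bi-homogeneous circuit every gate computes a bi-homogeneous polynomial of a determined bi-degree $\mathbf{d}=(d_1,d_2)$ with $0\le d_1\le r_1$, $0\le d_2\le r_2$; leaves have bi-degree $(1,0)$ or $(0,1)$; sum gates preserve bi-degree; and a product gate of bi-degree $\mathbf{d}$ has its two inputs of bi-degrees $\mathbf{e}$ and $\mathbf{d}-\mathbf{e}$ for some $\mathbf{e}$ with $0\le\mathbf{e}\le\mathbf{d}$ componentwise. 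This rigidity is exactly what allows one to dispense with a routing network and still obey every normal-form constraint.

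To build $\Phi$, index the (at most $N:=\lceil cs\rceil$) product gates of a putative normal-form circuit by slots $t\in\{1,\dots,N\}$. For each slot $t$, each bi-degree $\mathbf{d}\le(r_1,r_2)$, and each split $\mathbf{e}\le\mathbf{d}$ (discarding the trivial splits $\mathbf{e}=(0,0)$ and $\mathbf{e}=\mathbf{d}$, which force a constant factor and hence the zero polynomial), put into $\Phi$ a product gate $P_{t,\mathbf{d},\mathbf{e}}$ of fan-in $2$ together with two private sum gates $L_{t,\mathbf{d},\mathbf{e}}$, $R_{t,\mathbf{d},\mathbf{e}}$ feeding it, of intended bi-degrees $\mathbf{e}$ and $\mathbf{d}-\mathbf{e}$. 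Wire $L_{t,\mathbf{d},\mathbf{e}}$, with indeterminate weights, to every product gate of $\Phi$ of bi-degree $\mathbf{e}$ (the gates $P_{t',\mathbf{e},\mathbf{e}'}$) and, when $\mathbf{e}\in\{(1,0),(0,1)\}$, to the input variables of the relevant group; wire $R_{t,\mathbf{d},\mathbf{e}}$ the same way for bi-degree $\mathbf{d}-\mathbf{e}$. Finally add $n$ output sum gates $O_1,\dots,O_n$, each wired to every product gate of $\Phi$ of bi-degree $(r_1,r_2)$ (and to the input variables when $(r_1,r_2)\in\{(1,0),(0,1)\}$), and declare these the output gates. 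One checks directly that $\Phi$ is acyclic, since bi-degree strictly increases through each (non-trivial) product gate; that $\Phi$ is bi-homogeneous of the intended bi-degrees, by induction on bi-degree, for \emph{every} weight assignment; and that $\Phi$ is in normal bi-homogeneous form — the private sum gates have fan-out $1$, product gates have fan-in $2$, leaves feed only sum gates, gates alternate, and the output gates are sum gates. Counting: there are $N=\bigO(s)$ slots and $\binom{r_1+2}{2}\binom{r_2+2}{2}$ pairs $(\mathbf{d},\mathbf{e})$, with a bounded number of gates per triple $(t,\mathbf{d},\mathbf{e})$, so $\Phi$ has $\binom{r_1+2}{2}\binom{r_2+2}{2}\cdot\bigO(s)=\bigO(r_1^2r_2^2s)$ gates.

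For universality, take a bi-homogeneous circuit $C$ of size $s$ with $n+m$ inputs and $n$ outputs of bi-degree $(r_1,r_2)$, normalize it to $C'$ of size $\le cs$ via Proposition~\ref{propnormalform}, and order the product gates of $C'$ topologically as $h_1,\dots,h_{p'}$ with $p'\le N$; say $h_t$ has bi-degree $\mathbf{d}_t$ and inputs of bi-degrees $\mathbf{e}_t$ and $\mathbf{d}_t-\mathbf{e}_t$. Identify $h_t$ with $P_{t,\mathbf{d}_t,\mathbf{e}_t}$, the left and right input sum gates of $h_t$ with $L_{t,\mathbf{d}_t,\mathbf{e}_t}$ and $R_{t,\mathbf{d}_t,\mathbf{e}_t}$, and the output sum gates of $C'$ with the $O_\ell$. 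Now transfer the edge weights of $C'$ onto the corresponding edges of $\Phi$, and put weight $0$ on every edge into an $L_{t,\mathbf{d},\mathbf{e}}$ or $R_{t,\mathbf{d},\mathbf{e}}$ with $(\mathbf{d},\mathbf{e})\ne(\mathbf{d}_t,\mathbf{e}_t)$, on every edge into a slot $t>p'$, and on every edge from a $P_{t',\mathbf{e},\mathbf{e}'}$ whose split $\mathbf{e}'$ does not match the one chosen for $h_{t'}$ or whose $h_{t'}$ does not feed the relevant sum gate of $C'$. An induction on bi-degree then shows that under this assignment $P_{t,\mathbf{d}_t,\mathbf{e}_t}$ computes the polynomial of $h_t$ while every other product gate of $\Phi$ computes $0$, so $O_\ell$ computes the $\ell$-th output of $C'$, which equals the $\ell$-th output of $C$. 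Hence $\Phi$ is universal. (The hypothesis $s\ge n+m$ only serves to keep these bounds clean.)

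The main obstacle is the tension between what a universal circuit wants — enough wiring freedom to realize an arbitrary interconnection of $\bigO(s)$ gates — and what normal bi-homogeneous form permits, in particular fan-in $2$ at products and fan-out $1$ at sums, which rule out the usual routing gadgets. The way out, already implicit in \cite{raz}, is to let bi-homogeneity do the routing: a gate's bi-degree and the bi-degrees of a product's two inputs are forced, so dynamic routing can be replaced by a static layering indexed by $(\text{slot},\mathbf{d},\mathbf{e})$, at the price of duplicating each product gate's two input sum gates once per pair $(\mathbf{d},\mathbf{e})$. The work lies in arranging this to be simultaneously acyclic, a legal normal bi-homogeneous circuit, genuinely universal, and of size only $\bigO(r_1^2r_2^2s)$; the two-degree bookkeeping — which is what turns Raz's single-variable $\bigO(r^2s)$ bound into $\bigO(r_1^2r_2^2s)$ — is the genuinely new part, and one must take care that the degenerate bi-degrees neither break acyclicity nor inflate the gate count.
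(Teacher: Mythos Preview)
Your argument is correct and follows essentially the same route as the paper: both reduce to normal bi-homogeneous form via Proposition~\ref{propnormalform}, then build $\Phi$ by stratifying gates according to their bi-degree and, for product gates, the bi-degree split of their two inputs, with enough copies ($\bigO(s)$ per stratum) to embed any normalized circuit, and finally wire each private sum gate to all product gates of the appropriate lower bi-degree. Your indexing by $(\text{slot},\mathbf{d},\mathbf{e})$ is the same data as the paper's sum-level/product-level/type bookkeeping, and your count $\binom{r_1+2}{2}\binom{r_2+2}{2}\cdot\bigO(s)$ matches the paper's $r_1r_2\cdot r_1r_2s$ level count; if anything, your treatment of acyclicity and of the degenerate splits $\mathbf{e}\in\{(0,0),\mathbf{d}\}$ is slightly more explicit than the paper's.
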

\begin{proof} 
  Replace $s$ by a constant factor of $s$ to account for the cost of normalization of a circuit of size $s$ (Proposition~\ref{propnormalform}).

  We describe the gates of the universal circuit $\Phi$ in groups which are the inputs, the sum-levels and the product-levels. There are:
  \begin{itemize}
    \item[-] $n+m$ input gates: $n$ inputs for variables of bi-degree $(1,0)$ and $m$ inputs for variables of bi-degree $(0,1)$.
    \item[-] $r_1r_2$ sum-levels, indexed by pairs of natural numbers $(l_1,l_2)$. Each sum-level contains $r_1r_2s$ sum-gates and no product gates. 
    \item[-] $r_1r_2$ product-levels, again indexed by pairs of natural numbers $(l_1,l_2)$, each containing $r_1r_2s$ product gates and no sum gates.  
  \end{itemize}
A gate in sum-level $(l_1,l_2)$ (respectively, product-level $(l_1,l_2)$), will compute a bi-homogeneous polynomial of bi-degree $(l_1,l_2)$. Let us now describe the edges of $\Phi$:
\begin{itemize}
  \item[-] The inputs of each sum gate at sum-level $(l_1,l_2)$ are all the product gates in product-level $(l_1,l_2)$. So, we imagine the sum-level $(l_1,l_2)$ as being above the product level $(l_1,l_2)$. There are no product-levels $(1,0)$ and $(0,1)$. Each sum-gate at sum-level $(1,0)$ is connected to every input corresponding to the variables of degree $(1,0)$; similarly for $(0,1)$. Each sum gate has fan-out at most 1. 
  \item[-] In product-level $(l_1,l_2)$, each gate has fan-in equal to 2. For each pair $(j_1,j_2)$ with $1\leq j_1 \leq l_1$, $1\leq j_2 \leq l_2$, there are $s$ product gates which take their inputs from sum gates in sum-level $(j_1,j_2)$ and $(l_1-j_1, l_2,-j_2)$. These gates are called product-level $(l_1,l_2)$ gates of \emph{type} $(j_1,j_2)$. This is why we took $r_1r_2s$ gates in each level; each sum gate has fan-out only one, so there need to be enough sum gates for the product gates to connect to. Since all sum gates in a sum-level $(j_1,j_2)$ are connected to all product gates in the product-level $(j_1,j_2)$, it does not matter which specific sum gate is connected to a product gate in product-level $(l_1,l_2)$.
\end{itemize}

This completes the description of $\Phi$. Note that, according to the above description, there are some gates which are left with no outgoing connections. These can be removed without effecting the result.  
Let $C$ be a bi-homogeneous circuit of size $s$, computing $n$ polynomials of bi-degree $(r_1,r_2)$ on $n+m$ variables. By Proposition~\ref{propnormalform} above, $C$  can be replaced by a circuit $C'$ in normal bi-homogeneous form. Being in normal bi-homogeneous form, the gates of $C'$ can be grouped into the same levels and types as above. So we can embed $C'$ into $\Phi$ and set the unused edges in $\Phi$ to $0$, provided there are enough gates, which is true since we made $\Phi$ have $s$ product gates for each product-level and type, and more than $s$ gates for each sum-level. 
\end{proof}

We can relax the degree requirement by putting together $r^2$ versions of $\Phi$; one for each pair of degrees. 

\begin{corollary}\label{coralldegreeuniversal}
  For every $r$, $n$, $m$ and $s\in \NN$, with $s\geq r(n+m)$, there exists a circuit $\Phi_{n,m,r,s}$ with $\bigO(r^6s)$ gates that is universal for the set of bi-homogeneous circuits of size $s$ that have $n+m$ inputs, and that output, for each $r_1,r_2 \leq r$, at most $n$ polynomials which are bi-homogeneous of bi-degrees $(r_1,r_2)$. 
\end{corollary}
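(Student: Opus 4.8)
The plan is to obtain $\Phi_{n,m,r,s}$ by running in parallel one copy of the universal circuit of Theorem~\ref{thmuniversalcircuit} for each admissible output bi-degree, all sharing a single block of $n+m$ input gates. First I would inflate $s$ by a constant factor so that the size parameter absorbs the cost of normalization (Proposition~\ref{propnormalform}); the hypothesis $s\geq r(n+m)$ ensures that this inflated parameter still dominates $n+m$, so that Theorem~\ref{thmuniversalcircuit} applies in each of the invocations below. Then, for each pair $(r_1,r_2)$ with $1\leq r_1,r_2\leq r$, let $\Phi_{r_1,r_2}$ be the circuit given by Theorem~\ref{thmuniversalcircuit} with input parameters $n$, $m$, bi-degree $(r_1,r_2)$, and the inflated size; it is in normal bi-homogeneous form and has $\bigO(r_1^2r_2^2s)=\bigO(r^4s)$ gates. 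The pairs in which $r_1=0$ or $r_2=0$ correspond to outputs involving only one of the two sets of variables, and for these one uses instead the single-graded homogeneous universal circuit of \cite{raz}, of size $\bigO(r^2s)$ or less (the bi-degree $(0,0)$ case being trivial). Define $\Phi_{n,m,r,s}$ to be the disjoint union of all these circuits with their input gates identified. Since there are $\bigO(r^2)$ pairs and each contributing circuit has $\bigO(r^4s)$ gates, the total is $\bigO(r^6s)$.

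It remains to verify universality. Let $C$ be a bi-homogeneous circuit of size $s$ on $n+m$ inputs whose outputs are, for every $r_1,r_2\leq r$, at most $n$ bi-homogeneous polynomials of bi-degree $(r_1,r_2)$. By Proposition~\ref{propnormalform} replace $C$ by a circuit $C'$ in normal bi-homogeneous form of size $\bigO(s)$ with the same outputs. For a fixed pair $(r_1,r_2)$, let $C'_{r_1,r_2}$ be the subcircuit of $C'$ spanned by those gates from which some output gate of bi-degree $(r_1,r_2)$ is reachable. All six defining conditions of normal bi-homogeneous form pass to $C'_{r_1,r_2}$ — in particular, if a product gate is retained then so are both of its inputs, so its fan-in stays $2$ — and $C'_{r_1,r_2}$ has size at most $|C'|=\bigO(s)$, uses at most $n+m$ input variables, and computes at most $n$ polynomials, all of bi-degree $(r_1,r_2)$. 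Regarding the unused inputs as present but disconnected and padding the list of outputs by zeros if there are fewer than $n$, Theorem~\ref{thmuniversalcircuit} yields an assignment of edge multipliers making $\Phi_{r_1,r_2}$ compute exactly the outputs of $C'$ of bi-degree $(r_1,r_2)$. Carrying this out for every pair, and setting to $0$ all the edges of the various $\Phi_{r_1,r_2}$ not used in these embeddings, produces an assignment of multipliers on $\Phi_{n,m,r,s}$ under which its outputs are exactly the outputs of $C'$, hence of $C$.

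The construction and the gate count are essentially immediate from Theorem~\ref{thmuniversalcircuit}; the only step demanding a little care is the extraction of the bi-degree-pure pieces $C'_{r_1,r_2}$ and the check that each of them still meets every hypothesis of Theorem~\ref{thmuniversalcircuit} after normalization — that it is in normal bi-homogeneous form, and that its size, input count, and the number and bi-degrees of its outputs are all within the allowed bounds. I expect that bookkeeping, together with the handling of the boundary bi-degrees with a zero component, to be the main (and mild) obstacle.
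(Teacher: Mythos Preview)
Your proposal is correct and follows exactly the approach the paper intends: the paper's entire argument is the single sentence ``We can relax the degree requirement by putting together $r^2$ versions of $\Phi$; one for each pair of degrees,'' and your construction is precisely this, with the bookkeeping (extraction of the bi-degree-pure subcircuits $C'_{r_1,r_2}$, preservation of normal form, and the boundary bi-degrees with a zero component) spelled out carefully. The gate count $r^2\cdot\bigO(r^4s)=\bigO(r^6s)$ matches.
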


\begin{remark}\label{remmultihomog}
  We focused in this section on the bi-homogeneous case, but all the results of this section have obvious generalizations to the multi-homogeneous case with fixed number of components. The $l$-homogeneous $\Phi$ and $\Phi'$ would have $n_1+\dots+n_l$ inputs, and there would be $r^l$ sum-levels and $r^{l}$ product levels. But, $l$ would have to be fixed in sequences because otherwise the possible multi-degrees would become exponentially many. 
\end{remark}

\subsection{The Universal Circuit Resultant}
The aim of this section is to make $\Pproj$-complete and $\NPproj$-complete sequences. 

Let $\Phi_{n,m,r,s}$ be a universal circuit produced in Corollary~\ref{coralldegreeuniversal} above. We want to modify $\Phi_{n,m,r,s}$ so that the outputs of the circuits that we embedded in $\Phi_{n,m,r,s}$ become reductions of the outputs of the modified circuit $\Phi_{n,m,r,s}'$. For this, we add, as input to the circuit, a control-variable $t_e$ for each edge $e$ in $\Phi_{n,m,r,s}$, break the edge into two and put a multiplication gate to multiply with $t_e$ in the middle. All the edges of $\Phi_{n,m,r,s}'$ have multiplier one, but the computed polynomials are now controlled by the $t$-variables.  

A few observations: 
\begin{itemize}
  \item[-] Since each gate in $\Phi_{n,m,r,s}$ has fan-in at most $r^2s$, $\Phi_{n,m,r,s}'$ has $\bigO(r^8s^2)$ control variables $t_e$. Let $N=N(n,m,r,s)$ denote the number of control variables. 
  \item[-] $\Phi_{n,m,r,s}'$ has $\bigO(r^8s^2)$ gates. 
  \item[-] The outputs of $\Phi'_{n,m,r,s}$ are tri-homogeneous polynomials of degree at most $r$ (component-wise) in the original variables and degree $2r$ in the new control variables $t_e$. 
\end{itemize}

As the outputs of $\Phi'_{n,m,r,s}$ are tri-homogeneous, they have a well-defined zero-set $Z_{n,m,r,s} \subset \PP^{n-1}\times \PP^{m-1} \times \PP^{N-1}$. We project it using the projection map 
$$\pi_{1,3}:\PP^{n-1}\times \PP^{m-1}\times \PP^{N-1} \to \PP^{n-1} \times \PP^{N-1}.$$ 

\begin{definition}
  The \emph{universal circuit resultant}, $\UCR_{n,m,r,s} \subset \PP^{n}$, is the image $\pi_{1,3}(Z_{n,m,r,s})$ of the zero-set of the outputs of $\Phi_{n,m,r,s}'$. 
\end{definition}

To get a sequence, we need to specialize the circuit size and the degree, as will be apparent later, it is better to not specialize $m$. We set $r=n$ and $s=n^2+n+m$. Here, the choice of $n^2+n+m$ is arbitrary, in the sense that any $s$ whose difference to $n^2$, the maximum number of outputs of $\Phi_{n,m,r,s}'$ when $r=n$, is a non-constant polynomial in $n$ would have sufficed. We added $m$ here just to make sure that $s\geq n+m$. For simplicity, let us abuse notation and write
$$\UCR_{n,m} := \UCR_{n,m,n,n^2+n+m} \subset \PP^{n-1}\times \PP^{N(n)-1},$$
and $\Phi'_{n,m} := \Phi'_{n,m,n,n^2+n+m}$, and 
and its zero-set,
$Z_{n,m} = Z_{n,m,n,n^2+n+m}$ as necessary.

The universal circuit resultant $\UCR_{n,m}$ is the family that will be shown to the $\NPproj$-complete; but it has two indices and we defined completeness for only single-indexed families. But, $(\UCR_{n,m})$ can easily be re-indexed to have only one index that enumerates all pairs, with only quadratic loss.

\begin{theorem}\label{thmucrishard}
  $(\UCR_{n,m})$ is $\NPproj$-hard (in the sense that its single-index re-indexing is $\NPproj$-hard).  
\end{theorem}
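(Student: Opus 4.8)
The plan is to show that an arbitrary sequence $(X_n \subset \PP^{\on(n)})$ in $\NPproj$ reduces, via projective-linear maps, to $(\UCR_{n,m})$. By definition of $\NPproj$, write $X_n = \pi_n^{\on_1,\on_2}(Y_n)$ with $(Y_n \subset \PP^{\on_1(n)} \times \PP^{\on_2(n)})$ in $\Pproj$; so $Y_n$ is cut out by bi-homogeneous polynomials $f_1,\dots,f_{k(n)}$ of polynomially bounded degree and number, computed by a homogeneous circuit $C_n$ of polynomially bounded size $\sigma(n)$. The first step is to normalize: bundle all the $f_i$ into a single circuit, pad the number of outputs up to exactly (a quantity of the form) $n'^2$ and the degrees up to a common bound $r = n'$, where $n' \geq \max(\on_1(n)+1, \deg, k(n), \dots)$ is a polynomially bounded function of $n$ — this is exactly the regime in which Corollary~\ref{coralldegreeuniversal} applies. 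Here we use that padding with zero outputs and raising degree by multiplying by a power of a fixed variable does not change the zero-set, and we must be slightly careful that the outputs remain bi-homogeneous, which they do.

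The heart of the argument is the universality of $\Phi_{n',m,r,s}$ and the control-variable trick. Since $C_n$ (after normalization, and after absorbing the constant blow-up from Proposition~\ref{propnormalform}) is a bi-homogeneous circuit of the allowed size $s = n'^2 + n' + m$ with the allowed inputs and output bi-degrees, there is an assignment $e \mapsto \mu_e \in \FF$ of multipliers to the edges of $\Phi_{n',m,r,s}$ making it compute the same polynomials as $C_n$. In the modified circuit $\Phi'_{n',m,r,s}$ each edge $e$ carries instead a control variable $t_e$; specializing $t_e = \mu_e$ recovers the polynomials $f_i$. Geometrically, let $a = [\mu_e]_e \in \PP^{N(n')-1}$ be the corresponding point (one must check the $\mu_e$ are not all zero, which can be arranged). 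Consider the inclusion $\iota_n : \PP^{\on_1(n)} \hookrightarrow \PP^{n'-1}$ (coordinate inclusion, a projective-linear map, after padding $\PP^{\on_1}$ up to $\PP^{n'-1}$) and the constant map to $a$ in the $\PP^{N(n')-1}$ factor; this gives a projective-linear map $\rho_n : \PP^{\on_1(n)} \to \PP^{n'-1} \times \PP^{N(n')-1}$, the target being $\PP^{\nn}$ for $\UCR_{n',m}$. The claim is $\rho_n^{-1}(\UCR_{n',m}) = X_n$.

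To verify the claim, unwind the definitions. A point $x \in \PP^{\on_1(n)}$ lies in $\rho_n^{-1}(\UCR_{n',m})$ iff $(\iota_n(x), a) \in \pi_{1,3}(Z_{n',m})$, i.e.\ iff there exists $y \in \PP^{m-1}$ with $(\iota_n(x), y, a) \in Z_{n',m}$, i.e.\ iff the outputs of $\Phi'_{n',m}$ vanish at $(\iota_n(x), y, a)$ for some $y$. But with $t_e$ specialized to $\mu_e$, those outputs are exactly $f_1,\dots,f_{k(n)}$ evaluated at $(x,y)$ (extended by zero-padding), so this says $(x,y) \in Y_n$ for some $y$, i.e.\ $x \in \pi(Y_n) = X_n$. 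Here we must choose $m$ as a function of $n$ so that $\PP^{m-1}$ accommodates the second factor $\PP^{\on_2(n)}$ of $Y_n$ — this is why $m$ is left unspecialized in the definition of $\UCR_{n,m}$ — and then invoke the remark that the two-index family $(\UCR_{n,m})$ re-indexes to a one-index family with only quadratic loss, so that "$(X_n)$ reduces to $(\UCR_{n,m})$" makes sense with a polynomially bounded $p$. Finally one checks all the size bounds are polynomial in $n$: $n'$, $m$, $s$, hence $N(n')$ and the dimension $n'-1$ of the target, are all polynomially bounded, and $\rho_n$ is genuinely projective-linear.

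I expect the main obstacle to be the bookkeeping in the normalization step — matching the output count, the input counts $(n,m)$, and the bi-degrees of the $Y_n$-equations to the rigid combinatorial shape of $\Phi_{n,m,r,s}$ required by Corollary~\ref{coralldegreeuniversal}, all while keeping every parameter polynomially bounded and keeping track of the constant-factor size blow-up from Proposition~\ref{propnormalform}. A secondary subtlety is ensuring the multiplier vector $(\mu_e)$ defines an honest point of projective space (nonvanishing) and that zero-padding outputs/variables is compatible with bi-homogeneity; both are routine but need a sentence each.
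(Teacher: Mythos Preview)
Your proposal is correct and follows essentially the same approach as the paper's own proof: embed the circuit $C_n$ computing $Y_n$ into the universal circuit via Corollary~\ref{coralldegreeuniversal}, then take the reduction $\rho_n$ to be the coordinate inclusion on the $x$-variables paired with the constant map to the point $[\mu_e]$ determined by the edge-multiplier assignment, and verify $\rho_n^{-1}(\UCR_{n',m}) = X_n$ by unwinding the projection. Your extra care about padding, the nonvanishing of $(\mu_e)$, and the choice of $m = \on_2(n)$ is exactly the bookkeeping the paper leaves implicit; note that the degree-padding you mention is in fact unnecessary, since Corollary~\ref{coralldegreeuniversal} already handles outputs of all bi-degrees up to $r$ simultaneously.
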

\begin{proof}
  Let $X_n = \pi_1(Y_n)$, be a sequence in $\NPproj$; with $Y_n \subset \PP^{\on_1(n)-1}\times \PP^{\on_2(n)-1}$ a sequence in $\Pproj$. 

  We want to show that $(X_n)$ reduces to $(\UCR_{n,m})$. Let $C_n$ be a bi-homogeneous circuit of polynomially bounded size $s(n)$ which computes $Y_n \subset \PP^{\on_1}\times \PP^{\on_2}$. Let the maximum component of the multi-degree of the outputs of $C_n$ be $d(n)$. For each $n$, let $q(n)$ be the maximum of $\on_1(n)+\on_2(n)+1$, $d(n)$ and $s(n)$. Then, by Corollary~\ref{coralldegreeuniversal} and the discussion above, the control variables $t_i$ or each edge in $\Phi'_{q,\on_2,q,q^2+q+\on_2}$ can be set to appropriate values $\tau_i$ so that the non-zero outputs of $C_n$ and $\Phi'_{q,\on_2,q,q^2+q+\on_2}$ are identical. 
  Therefore, for all $x_1,\dots,x_{n_1},y_1,\dots,y_{\on_2}$, we have, after omitting extra zero-outputs, 
  \begin{multline*}
  C_n(x_1,\dots,x_{n_1},y_1,\dots,y_{\on_2}) = \Phi'_{q,\on_2,q,q^2+q+\on_2}(x_1,\dots,x_{n_1},0,\dots,0, \\ y_1,\dots,y_{\on_2},\tau_1,\dots,\tau_N).
  \end{multline*}
  Hence, for $\left[ x_1:\dots:x_{\on_1} \right]\in \PP^{n_1}$, there exists $\left[ y_1:\dots:y_{\on_2} \right]\in \PP^{\on_2}$ such that 
$$C_n(x_1,\dots,x_{n_1},y_1,\dots,y_{\on_2})=(0,\dots,0),$$ 
if and only if there exists $\left[ y_1:\dots:y_{\on_2} \right]\in \PP^{\on_2}$ such that $$\Phi'_{q,\on_2,q,q^2+q+\on_2}(x_1,\dots,x_{n_1},0,\dots,0, \\ y_1,\dots,y_{\on_2},\tau_1,\dots,\tau_N)=(0,\dots,0).$$

So, if make the reduction $\rho_{n}: \PP^{\on_1-1} \to \PP^{q-1} \times \PP^{N(n,m)-1}$ by mapping
  $$\rho([x_1,\dots,x_{n_1}]  = ([x_1,\dots,x_{n_1},0,0,\dots,0],[\tau_1,\dots,\tau_N]),$$ 
then, we have 
$\rho^{-1}\UCR_{n,m} = X_n$. 
Thus, ($X_n$) reduces to ($\UCR_{n,m})$.
\end{proof}

While $(\UCR_{n,m} \subset \PP^{n-1}\times \PP^{N(n,m)-1})$ is $\NPproj$-hard, it is not $\NPproj$-complete because sequences in $\NPproj$ were defined to be sequences of varieties in projective spaces, not products of projective spaces. While this is not a fundamental issue and could be remedied by slightly changing the definitions, c.f. Remark~\ref{remmultiprojectivesegre}, it is also possible to use the Segre embedding to make, directly, a complete sequence of projective varieties as follows. 

Consider  the image $\sigma_{n,m}\UCR_{n,m} \subset \PP^{nN(n,m)-1}$ of $\UCR_{n,m}$ under the Segre embedding 
$$\sigma_{n,m} :  \PP^{n-1}\times \PP^{N-1} \to \PP^{nN-1},$$ 
$$\sigma_{n,m}(\left[ x_i \right]_{i}, [t_j]_{j}) = [x_{i}t_j]_{i,j}.$$

\begin{theorem}\label{thmsegreucrcomplete}
  $(\sigma_{n,m}\UCR_{n,m})$ is $\NPproj$-complete. 
\end{theorem}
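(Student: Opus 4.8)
\emph{Overview and hardness.} Completeness splits into $\NPproj$-hardness and membership in $\NPproj$ (both in the single-index re-indexed sense of Theorem~\ref{thmucrishard}), and hardness is immediate from Theorem~\ref{thmucrishard}. Indeed, for $(X_n)\in\NPproj$ that theorem gives projective-linear maps $\rho_n\colon\PP^{\on_1-1}\to\PP^{q-1}\times\PP^{N-1}$ with $\rho_n^{-1}(\UCR)=X_n$ of the special form $[x]\mapsto\bigl([x,0,\dots,0],[\tau_1,\dots,\tau_N]\bigr)$, the $\tau_j$ being constants. Post-composing with the Segre embedding $\sigma$, the coordinates of $\sigma\circ\rho_n$ are the linear forms $x'_i\tau_j$, each a scalar multiple of an input variable or zero; so $\sigma\circ\rho_n$ is an honest linear map into a single projective space, everywhere defined because not all $\tau_j$ vanish, and $(\sigma\circ\rho_n)^{-1}(\sigma(\UCR))=\rho_n^{-1}(\UCR)=X_n$ since $\sigma$ is injective. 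Hence $(X_n)$ reduces to $(\sigma_{n,m}\UCR_{n,m})$.

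\emph{Membership: the target variety.} Let $g_1,\dots,g_M$ be the tri-homogeneous outputs of $\Phi'_{n,m}$, so $Z_{n,m}=\{g_\alpha=0\}_\alpha\subset\PP^{n-1}\times\PP^{m-1}\times\PP^{N-1}$ and $\UCR_{n,m}=\pi_{1,3}(Z_{n,m})$. Writing $w_{ij}$ for the coordinates on $\PP^{nN-1}$, the plan is to produce a variety $W_{n,m}\subset\PP^{nN-1}\times\PP^{m-1}$ with $\sigma_{n,m}\UCR_{n,m}=\pi_1(W_{n,m})$, cut out by (i) the $2\times2$ minors $w_{ij}w_{kl}-w_{il}w_{kj}$, which force $[w]$ onto the Segre variety and so determine $([x],[t])$ with $w_{ij}=x_it_j$, and (ii) a family of bi-homogeneous polynomials $\widetilde g_{\alpha,k_1,k_2}(w,y)$ whose common zero set on $\{w_{ij}=x_it_j\}$ is the preimage of $\{g_\alpha=0\}_\alpha$. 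Granting this, $([w],[y])\in W_{n,m}$ iff $[w]=\sigma_{n,m}([x],[t])$ for some $([x],[y],[t])\in Z_{n,m}$, whence $\pi_1(W_{n,m})=\sigma_{n,m}(\pi_{1,3}(Z_{n,m}))=\sigma_{n,m}\UCR_{n,m}$, and membership of $\sigma_{n,m}\UCR_{n,m}$ in $\NPproj$ reduces to $W_{n,m}\in\Pproj$. The minors in (i) are polynomially many, of degree $2$, with constant-size circuits, so everything rests on producing (ii) with polynomially bounded circuits; equivalently one may Segre-embed $Z_{n,m}$ itself and relate $[w]$ to it by bilinear equations, but either way this is essentially the fact that $\Pproj$ is closed under Segre-embedding a product of boundedly many projective spaces, which is the phenomenon behind Remark~\ref{remmultiprojectivesegre}.

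\emph{Membership: the re-homogenization.} Here is how I would build (ii). Since $g_\alpha$ has $x$-degree $a_\alpha\le n$ and $t$-degree $c_\alpha\le 2n$, first balance it: for each index pair $(k_1,k_2)$ put $G_{\alpha,k_1,k_2}:=g_\alpha\,x_{k_1}^{\,2n-a_\alpha}\,t_{k_2}^{\,2n-c_\alpha}$, homogeneous of degree $2n$ in both $x$ and $t$, computed by a circuit only $O(n)$ bigger than that of $g_\alpha$, and put it in multi-homogeneous normal form (Proposition~\ref{propnormalform} and Remark~\ref{remmultihomog}), so each gate $\gamma$ acquires a definite multidegree $(e^\gamma_x,e^\gamma_y,e^\gamma_t)$ with $e^\gamma_x,e^\gamma_t\le 2n$ and every product gate has fan-in $2$. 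Now translate the circuit gate by gate: replace $\gamma$ by a gate computing $\widehat p_\gamma(w,y)$, defined to agree, on $\{w_{ij}=x_it_j\}$, with $p_\gamma\cdot x_{k_1}^{\,e^\gamma_t}\,t_{k_2}^{\,e^\gamma_x}$ — the $(x,t)$-part of $p_\gamma$ balanced to bidegree $(e^\gamma_x+e^\gamma_t,\,e^\gamma_x+e^\gamma_t)$, hence a genuine polynomial in the $w$'s, with $y$ untouched. The choice of weight $e^\gamma_x+e^\gamma_t$ (not $\max$) is what makes this work: it is \emph{additive} across product gates, so the translation is local — a leaf $x_i$ becomes the variable $w_{i,k_2}$, a leaf $t_j$ becomes $w_{k_1,j}$, a leaf $y_k$ stays $y_k$, a sum gate stays a sum (its summands share a multidegree), and a product gate stays a product with no monomial correction. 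Thus the translated circuit has the same size, its output $\widetilde g_{\alpha,k_1,k_2}$ is bi-homogeneous of bidegree at most $(4n,n)$, and on $\{w_{ij}=x_it_j\}$ it equals $g_\alpha\,x_{k_1}^{\,4n-a_\alpha}\,t_{k_2}^{\,4n-c_\alpha}$; letting $(k_1,k_2)$ range over all $O(nN)$ pairs therefore cuts out exactly $\{g_\alpha=0\}$, since every point of $\PP^{n-1}\times\PP^{N-1}$ has some $x_{k_1}\ne0$ and some $t_{k_2}\ne0$. So (ii) holds with polynomially bounded circuits, $W_{n,m}\in\Pproj$, and $\sigma_{n,m}\UCR_{n,m}\in\NPproj$.

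\emph{The main obstacle.} The delicate step, and the one I expect to be the crux, is precisely keeping this re-homogenization polynomial. A naive implementation — reading off the monomials of $g_\alpha$, or boosting by all monomials of the required degree rather than by the $O(n+N)$ pure powers — blows up either the degree or the number of equations exponentially, because $g_\alpha$ can have exponentially many monomials. It is the combination of the normal-form structure of the universal circuit, the polynomially bounded output degrees, the pure-power boosting, and the additive weight $e_x+e_t$ (which makes the gate-by-gate translation division-free) that keeps all the circuits, degrees and equation counts polynomial. The only other point needing a moment's care is the everywhere-definedness of $\sigma\circ\rho_n$ in the hardness argument, which holds simply because the constants $\tau_j$ are not all zero.
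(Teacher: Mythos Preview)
Your proof is correct and follows the same global architecture as the paper's: hardness by composing the reductions of Theorem~\ref{thmucrishard} with the Segre map (using that the $\tau_j$ are constants, so the composite is linear), and membership by exhibiting a witness $W_{n,m}\subset\PP^{nN-1}\times\PP^{m-1}$ in $\Pproj$, cut out by the Segre $2\times2$ minors together with a rewriting of the outputs $g_\alpha$ of $\Phi'_{n,m}$ in the $w$-variables.

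The genuine difference is in how you perform that rewriting. The paper exploits a feature specific to $\Phi'_{n,m}$: because the $t_e$ are control variables attached to \emph{every} edge, each $x_i$ is already multiplied by some $t_e$ on its very first outgoing edge, so one can substitute $z_{i,e}$ for $x_it_e$ directly in the circuit and eliminate all $x$-variables at no cost; only the leftover $t$'s then need boosting, and this is done with $n$ copies of the circuit, one per $x_j$, giving equations $x_j^{u}f_i$. Your argument instead treats $\Phi'_{n,m}$ as a generic tri-homogeneous circuit and performs a gate-by-gate translation after normal-forming, using the additive weight $e_x+e_t$ to keep the translation local and division-free; you then range over all $O(nN)$ pairs $(k_1,k_2)$ of boosting indices. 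This is more equations and a longer argument, but it buys generality: your proof in fact shows that $\Pproj$ is closed under Segre-embedding any bounded product of projective spaces, without using anything about the particular circuit. The paper's shortcut is tied to the control-variable structure of the universal circuit and would not apply verbatim to an arbitrary $(Y_n)\in\Pproj$.
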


\begin{proof}
  Consider the incidence variety $Z_{n,m} \subset \PP^{n-1}\times\PP^{m-1}\times \PP^{N(n,m)-1}$ whose projection $\pi_{1,3}(Z_{n,m}) = \UCR_{n,m}$. The equations for $Z_{n,m}$ are computed by the universal circuit $\Phi_{n,m}'$. 
  If we consider the induced map, 
  $$\widetilde{\sigma_{n,m}} :  \PP^{n-1}\times \PP^{m-1} \times \PP^{N(n,m)-1} \to \PP^{nN(n,m)-1}\times \PP^{m-1},$$
  then we have $\pi_1(\widetilde{\sigma_{n,m}}(Z_{n,m})) = \sigma_{n,m}(\UCR_{n,m})$. So, to see that $\sigma_{n,m}\UCR_{n,m}$ is in $\NPproj$, it suffices to show that $(\widetilde{\sigma_{n,m}}(Z_{n,m}))$ is in $\Pproj$. 

  To compute $\widetilde{\sigma_{n,m}}(Z_{n,m})$, consider the equations $f_1,\dots,f_{M}$ of $Z_{n,m}$,which are the outputs of $\Phi_{n,m}'$. We have $M = n^4$, since the number of top level sum-gates in $\Phi_{n,m}'$. These are tri-homogeneous in the $n$-many $x$-variable inputs, $m$-many $y$-variable inputs of $\Phi_{n}'$ and the control variables $t_i$. Consider the homogeneous coordinates of $\PP^{nN-1}\times \PP^{m-1}$ as the variables $\left\{ z_{ij} \right\}$ and $\left\{ y_i \right\}$. 

  We make the equations for $\widetilde{\sigma_{n,m}}(Z_{n,m})$ in two sets:
\begin{itemize}
    \item[-] $\bigO(n^2)$ quadratic equations in the $z$-variables for the image of the Segre embedding.
    \item[-] $\bigO(n^5)$ equations to cut out $\widetilde{\sigma_{n,m}}(Z_{_n,m})$ inside $\widetilde{\sigma_{n,m}}(\PP^{n-1}\times \PP^{N-1})$. 
\end{itemize}
To make the second set of equations, we will turn $f_1,\dots,f_{n^2}$ into equations in the $z$-variables. Since $z_{ij}=x_it_j$ under the Segre map, we want to ensure that the $x$-degree and the $t$-degree of each $f_i$ are equal. But since the $t$-variables are control variables for the edges in $\Phi_{n,m}$, each $x$-variable is already multiplied by a $t$-variable in the first set of edges of the circuit. This means that the variable $z_{ij}$ can already be substituted in the circuit for $x_it_j$ to remove all $x$ variables from the circuit. After this process, by the construction of $\Phi'_{n,m}$, there are still $t$-variables in each $f_i$. These can also be replaced by $z$-variables by taking, $n$ copies of the circuit, and replacing, in the $j$th copy, every occurrence of every $t_i$ by $z_{ij} = t_ix_j$. This adds, for each equation $f_i$, equations of the form $x_0^{u}f_i, x_1^{u}f_i,\dots,x_n^{u}f_i$. All-together, these equations cut out $\widetilde{\sigma_{n,m}}(Z_{n,m})$, and the final circuit produced still has polynomial size. This completes the proof that $(\sigma_{n,m}\UCR_{n,m})$ is in $\NPproj$.

To see that $\sigma_{n,m}\UCR_{n,m}$ is also $\NPproj$-hard. Recall that, in the proof of the $\NPproj$-hardness of $\UCR_{n,m}$ above (Theorem~\ref{thmucrishard}), the reduction maps $$\rho_{n}: \PP^{\on_1-1} \to \PP^{q-1} \times \PP^{N-1}$$ were of the form: 
  $$\rho([x_1,\dots,x_{\on_1}]  = ([x_1,\dots,x_{\on_1},0,0,\dots,0],[\tau_1,\dots,\tau_N]),$$ 
  where $\tau_1,\dots,\tau_{N}$ were constants. For the reduction to $\sigma_{n,m}\UCR_{n,m}$, we take the reduction $\rho'_{n,m}(x_1,\dots,x_{\on_1}) = (\tau_jx_i)_{i,j}$, with $x_i = 0$ for $i>\on_1$. 
\end{proof}

\begin{remark}
    The above proof gives the following general fact. If 
    $$(X_n\subset \PP^{\on_1(n)}\times \PP^{\on_2(n)})$$ is $\NPproj$-complete, with projective-linear reductions which are non-constant on only one of the components, then the Segre embedding of $(X_n)$ is also $\NPproj$-complete. 
\end{remark}

There is also a $\Pproj$-complete family. Recall that the universal circuit $\Phi'_{n,0,n,n^2+n}$ is a homogeneous circuit, with $n$ so-called $x$-variables, no $y$-variables and $N = N(n,0)$ control variables $t_i$. 

\begin{proposition}\label{proppcomplete}
   The sequence of zero-sets of $\Phi'_{n,0,n,n^2+n}$ in $\PP^{n-1} \times \PP^{N-1}$ is $\Pproj$-complete.    
\end{proposition}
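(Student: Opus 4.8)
The plan is to show directly that the sequence $(W_n)$ of zero-sets of $\Phi'_{n,0,n,n^2+n}$ in $\PP^{n-1}\times\PP^{N-1}$ is (a) in $\Pproj$ and (b) $\Pproj$-hard. Part (a) is essentially immediate: $\Phi'_{n,0,n,n^2+n}$ is by construction a homogeneous arithmetic circuit of size $\bigO(n^8 s^2)$ with $s = n^2+n$, hence of size polynomial in $n$; its outputs are tri-homogeneous (here bi-homogeneous, since there are no $y$-variables) of degree at most $n$ in the $x$-variables and $2n$ in the control variables $t_i$, which is polynomially bounded, and their number $M = n^4$ is polynomially bounded. So $(W_n)$ satisfies the definition of $\Pproj$ verbatim.

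The substance is part (b). I would take an arbitrary sequence $(X_n\subset\PP^{\on(n)-1})$ in $\Pproj$, computed by homogeneous circuits $C_n$ of polynomially bounded size $s(n)$ outputting polynomials of degree $\leq d(n)$, and mimic the reduction in the proof of Theorem~\ref{thmucrishard}, but \emph{without} the projection step, since now there are no $y$-variables to project away. Set $q(n) = \max(\on(n), d(n), s(n))$, so that Corollary~\ref{coralldegreeuniversal} (with $m=0$) guarantees an assignment of the control variables $t_e$ to constants $\tau_e$ for which $\Phi'_{q(n),0,q(n),q(n)^2+q(n)}$, restricted to $x$-variables $x_1,\dots,x_{\on(n)}$ and padded with zeros in the remaining $x$-slots, computes exactly the non-zero outputs of $C_n$ (after discarding the extra identically-zero outputs). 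Then the projective-linear map
$$\rho_n : \PP^{\on(n)-1} \to \PP^{q(n)-1}\times\PP^{N(q(n),0)-1}, \qquad \rho_n([x_1{:}\cdots{:}x_{\on(n)}]) = ([x_1{:}\cdots{:}x_{\on(n)}{:}0{:}\cdots{:}0],\,[\tau_1{:}\cdots{:}\tau_N]),$$
is non-constant on the first component and constant on the second, hence projective-linear, and has polynomially bounded target dimension since $N(q,0)=\bigO(q^8 s^2)$ with $s = q^2+q$. By the matching of outputs, a point $[x]\in\PP^{\on(n)-1}$ lies in $X_n$ iff all outputs of $C_n$ vanish at it iff all outputs of $\Phi'_{q,0,q,q^2+q}$ vanish at $\rho_n([x])$, i.e.\ iff $\rho_n([x])\in W_{q(n)}$. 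Thus $\rho_n^{-1}(W_{q(n)}) = X_n$, so $(X_n)$ reduces to $(W_n)$.

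One point that needs care: the reduction must land in a \emph{single} index of the target sequence, and $W_n$ lives in $\PP^{n-1}\times\PP^{N(n)-1}$ where the second factor's dimension $N(n)$ is a fixed function of $n$ — so it suffices that the padding to $q(n)$ $x$-variables automatically forces the correct number $N(q(n))$ of control variables, which it does because $\Phi'$ is a deterministic function of its parameters. A second subtlety is whether zero-output gates of $\Phi'$ that become identically zero after the substitution $t_e = \tau_e$ cause a spurious equation: they do not, since the zero polynomial imposes no constraint on the zero-set. I expect the main obstacle to be purely bookkeeping — verifying that the substitution provided by Corollary~\ref{coralldegreeuniversal} genuinely produces \emph{constant} values $\tau_e$ (independent of $x$), so that $\rho_n$ is projective-linear rather than merely polynomial; this is exactly the content of the universality statement, since the multipliers realizing a given circuit $C$ inside $\Phi$ are scalars, and the control variables $t_e$ of $\Phi'$ absorb precisely these scalars. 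Granting that, the argument is a direct specialization of Theorem~\ref{thmucrishard} with the projection omitted.
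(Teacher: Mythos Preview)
Your argument for part (a) and for the reduction of \emph{projective} sequences in part (b) is correct and is exactly the specialization of Theorem~\ref{thmucrishard} that the paper has in mind. However, there is a genuine gap: you take ``an arbitrary sequence $(X_n\subset\PP^{\on(n)-1})$ in $\Pproj$'', but $\Pproj$ as defined in the paper contains \emph{bi}-projective sequences $(X_n\subset\PP^{\on_1(n)}\times\PP^{\on_2(n)})$ as well, and for $\Pproj$-completeness you must reduce those too. Your reduction $\rho_n$ sends a single projective space into $\PP^{q-1}\times\PP^{N-1}$ and cannot accommodate a second factor on the source side, since the universal circuit $\Phi'_{n,0,n,n^2+n}$ has no $y$-variables to receive the second block of coordinates.

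The paper handles this missing case in one line: ``To get the reductions for bi-projective varieties, we consider Segre embeddings as we did above,'' referring to the technique in the proof of Theorem~\ref{thmsegreucrcomplete}. Concretely, given $(X_n\subset\PP^{\on_1}\times\PP^{\on_2})$ in $\Pproj$, one rewrites its bi-homogeneous equations in the Segre variables $z_{ij}=x_iy_j$ (possibly at the cost of multiplying by monomials to balance degrees, exactly as in that proof), obtaining a projective sequence in $\Pproj$ to which your reduction then applies. You should add this step; without it the hardness claim is incomplete.
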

\begin{proof}
Since $\Phi'_{n,0,n,n^2+n}$ can simulate any homogeneous circuit of size less than or equal to $n^2+n$ (Corollary~\ref{coralldegreeuniversal}), we immediately have that sequences of projective varieties in $\Pproj$ reduce to the sequence of zero-sets of $\Phi'_{n,0,n,n^2+n}$. To get the reductions for bi-projective varieties, we consider Segre embeddings as we did above.
\end{proof}

\begin{remark}
  \label{remmultiprojectivesegre} 
  We could also have chosen to work with multi-homogeneous varieties $(X_n\subset \PP^{\on_1(n)}\times \dots \times \PP^{\on_{l(n)}(n)})$, defined complexity in the same way using multi-homogeneous circuits and multi-homogeneous varieties.
  If $l(n)$ was required to be bounded by a constant for all $n$, then the above theorems would still hold with $r^l$ levels in the proof of Theorem~\ref{thmuniversalcircuit} and larger, but polynomially growing circuits everywhere, with $l$ in the exponent. But, for Theorem~\ref{thmucrishard}, we would need to change the reductions to multi-linear reductions; this is so that the Segre map itself can be considered a reduction.  
  If $l(n)$ was not required to be constant, but required to be polynomially bounded, then the proof Theorem~\ref{thmuniversalcircuit}, and therefore of subsequent theorems would not work as the number of `levels' and therefore the size of the universal circuits would increase exponentially in $n$.  
\end{remark}

\section{Alternative versions}

\subsection{Affine Varieties}\label{affinesection} There is an alternative to going directly to projective varieties. We can consider, as basic objects, sequences $(X_n \subset \AA^{\on_1(n)})$. Such sequences are in $\Paff$ if there are polynomials computed by circuits of polynomially bounded size that cut out $X_n$. 

We define the class $\NPaff$ as those sequences of the form $\widebar{\pi(Y_{n+m(n)})}$ for sequences $(Y_n)$ in $\Paff$, where $\pi: \AA^{n+m(n)} \to \AA^{n}$ is the projection onto the first $n$ coordinates and $m$ is a polynomially bounded function. The closure ensures that the projection stays affine. This is consistent with algebraic elimination as well, since if we take an ideal $I\subset k\left[ x_1,\dots,x_n,y_1,\dots,y_m \right]$, then the elimination ideal $I\cap k\left[ x_1,\dots,x_n \right]$ is the ideal of the closure of the projection of $V(I)\subset \AA^{n+m}$.

The problem is that it is not clear whether this class $\NPaff$ is closed under reductions. More precisely, we are asking the following question:

\begin{question}
 Given $f_1,\dots,f_k$, in variables $x_1,\dots,x_n$, computed by an arithmetic circuit of size $s$, can one construct polynomials $g_1,\dots,g_s$ in variables $x_1,\dots,x_n,$ and $z_1,\dots,z_N,t$, computed by an arithmetic circuit of size polynomially bounded in $s$, such that the image of the zero-set of $g_1,\dots,g_s$ under a projection map is the cone of the projective closure of the zero-set of $f_1,\dots,f_k$? 
\end{question}

In other words, does the operation of taking projective closure `reduce' to geometric elimination? 

This question is also the main technical challenge in proving that the resultant (of quadratics) is $\NPproj$-complete (or also $\NP$-complete in BSS theory).   

\subsection{Scheme-Theoretic Complexity} Now consider, as basic objects sequences of closed subschemes $(X_n \to \PP^{\on_1(n)})$ of projective space and closed subschemes $(X_n \to \PP^{\on_1(n)}\times \PP^{\on_2(n)})$ of bi-projective space.  We say that a sequence $(C_n)$ of circuits computes $(X_n)$ if for each $n$, $X_n$ is the zero-scheme of the outputs of $C_n$. This gives a definition of $\Psch$.

We define $\NPsch$ just we defined $\NPproj$, but using the scheme-theoretic image under the projection. A sequence $(X_n \subset \PP^{\on_1(n)})$ reduces to $(Y_n\subset \PP^{\om(n)})$ if there are linear maps $\rho_{n}: \PP^{\on_1(n)} \to \PP^{\om(p(n))}$, where $p(n)$ is a polynomially bounded function, such that the fiber product $$Y_n \times_{\PP^{m(n)}} P^{\on_1(n)} = X_n.$$

The proof of Theorem~\ref{thmucrishard} carries over exactly to the scheme case; but the proof of Theorem~\ref{thmsegreucrcomplete} does not.\ 

\nocite{GKZ}
\nocite{ValiantPermanent, basuzell}

%\bibliography{complexity}

\begin{thebibliography}{DMM{\etalchar{+}}14}

\bibitem[Bas12]{basutoda}
Saugata Basu, \emph{A complex analogue of {T}oda's theorem}, Foundations of
  Computational Mathematics \textbf{12} (2012), no.~3, 327--362.

\bibitem[Bas15]{basuconstr}
\bysame, \emph{A complexity theory of constructible functions and sheaves},
  Foundations of Computational Mathematics \textbf{15} (2015), no.~1, 199--279.

\bibitem[BCS13]{Buergetal}
Peter B{\"u}rgisser, Michael Clausen, and Amin Shokrollahi, \emph{Algebraic
  complexity theory}, vol. 315, Springer Science \& Business Media, 2013.

\bibitem[B{\"u}r13]{BuergisserCompleteness}
Peter B{\"u}rgisser, \emph{Completeness and reduction in algebraic complexity
  theory}, vol.~7, Springer Science \& Business Media, 2013.

\bibitem[BZ10]{basuzell}
Saugata Basu and Thierry Zell, \emph{Polynomial hierarchy, {B}etti numbers, and
  a real analogue of {T}oda's theorem}, Foundations of Computational
  Mathematics \textbf{10} (2010), no.~4, 429--454.

\bibitem[DMM{\etalchar{+}}14]{durandetal}
Arnaud Durand, Meena Mahajan, Guillaume Malod, Nicolas de~Rugy-Altherre, and
  Nitin Saurabh, \emph{Homomorphism polynomials complete for {VP}},
  LIPIcs-Leibniz International Proceedings in Informatics, vol.~29, Schloss
  Dagstuhl-Leibniz-Zentrum fuer Informatik, 2014.

\bibitem[GKZ08]{GKZ}
Israel~M Gelfand, Mikhail Kapranov, and Andrei Zelevinsky, \emph{Discriminants,
  resultants, and multidimensional determinants}, Springer Science \& Business
  Media, 2008.

\bibitem[Kay12]{kayalaffine}
Neeraj Kayal, \emph{Affine projections of polynomials}, Proceedings of the
  forty-fourth annual ACM symposium on Theory of computing, ACM, 2012,
  pp.~643--662.

\bibitem[MS01]{gct1}
Ketan~D Mulmuley and Milind Sohoni, \emph{Geometric complexity theory i: An
  approach to the {P} vs. {NP} and related problems}, SIAM Journal on Computing
  \textbf{31} (2001), no.~2, 496--526.

\bibitem[MS16]{mahajansaurabh}
Meena Mahajan and Nitin Saurabh, \emph{Some complete and intermediate
  polynomials in algebraic complexity theory}, International Computer Science
  Symposium in Russia, Springer, 2016, pp.~251--265.

\bibitem[Mul11]{gctsurvey}
Ketan~D Mulmuley, \emph{On {P} vs. {NP} and geometric complexity theory:
  Dedicated to {S}ri {R}amakrishna}, Journal of the ACM (JACM) \textbf{58}
  (2011), no.~2, 5.

\bibitem[Raz08]{raz}
Ran Raz, \emph{Elusive functions and lower bounds for arithmetic circuits},
  Proceedings of the fortieth annual ACM symposium on Theory of computing, ACM,
  2008, pp.~711--720.

\bibitem[Shu14]{shubproblems}
Michael Shub, \emph{Some problems for this century}, Talk presented at
  FOCM’14, December 11, 2014, Montevideo, 2014.

\bibitem[Val79a]{ValiantClasses}
Leslie~G Valiant, \emph{Completeness classes in algebra}, Proceedings of the
  eleventh annual ACM symposium on Theory of computing, ACM, 1979,
  pp.~249--261.

\bibitem[Val79b]{ValiantPermanent}
\bysame, \emph{The complexity of computing the permanent}, Theoretical computer
  science \textbf{8} (1979), no.~2, 189--201.

\end{thebibliography}
\bibliographystyle{amsalpha}
\newcommand{\etalchar}[1]{$^{#1}$}
\def\cprime{$'$}
\providecommand{\bysame}{\leavevmode\hbox to3em{\hrulefill}\thinspace}
\providecommand{\MR}{\relax\ifhmode\unskip\space\fi MR }
% \MRhref is called by the amsart/book/proc definition of \MR.
\providecommand{\MRhref}[2]{%
  \href{http://www.ams.org/mathscinet-getitem?mr=#1}{#2}
}
\providecommand{\href}[2]{#2}

\end{document}